
\documentclass[12pt,titlepage]{amsart}
\usepackage{amsfonts, amssymb, amsmath,amsthm, mathdots, verbatim, pmat,pdflscape}
\newtheorem{theorem}{Theorem}[section]

\newtheorem{lemma}[theorem]{Lemma}
\newtheorem{corollary}[theorem]{Corollary}
\theoremstyle{remark}

\newtheorem{conjecture}[theorem]{Conjecture}

\theoremstyle{definition}

\DeclareMathOperator{\diag}{{\mathrm {diag}}} 	
\newcommand{\abs}[1]{|#1|}
\newcommand{\mb}[1]{\mathbf{#1}}

\newcommand{\Z}{{\mathbb Z}}

\newcommand{\F}{{\mathbb F}}
\newcommand\coolleftbrace[2]{%
#1\left\{\vphantom{\begin{matrix}%
#2 \end{matrix}}\right.}
\begin{document}
\title{The Smith group of the hypercube graph}
\author{David B. Chandler, Peter Sin$^*$ and Qing Xiang}
\address{6 Georgian Circle, Newark, DE 19711\\USA}
\address{Department of Mathematics\\University of Florida\\ P. O. Box 118105\\ Gainesville, FL 32611\\ USA}
\address{Department of Mathematical Sciences, University of Delaware, Newark, DE 19716\\USA}
\date{}
\thanks{$^*$This work was partially supported by a grant from the Simons Foundation (\#204181 to Peter Sin)}
\date{}
\begin{abstract} The $n$-cube graph is the graph on the vertex set
of $n$-tuples of $0$s and $1$s, with two vertices joined by
an edge if and only if the $n$-tuples differ in exactly one component.
We compute the Smith group of this graph, or, equivalently, the
elementary divisors of an adjacency matrix of the graph.
\end{abstract}
\maketitle
\section {Introduction}
Let $Q_n$ be the $n$-cube graph, with vertex set $\{0,1\}^n$ and
two vertices joined if they differ in one component. In the language of association
schemes, $Q_n$ is the distance 1 graph of the binary Hamming scheme.

It is of interest to compute linear algebraic invariants of a graph, such as 
its eigenvalues and the invariant factors of an adjacency matrix  or
Laplacian matrix. In the case of $Q_n$, previous work includes
\cite{Bai} and \cite{DJ}, where many of these invariants have been computed 
and some conjectures made about others. Here we shall consider the
Smith group. If $X$ is an $m\times n$ integral matrix, then the {\it Smith
group} of $X$ is the abelian group defined as the quotient of $\Z^m$ by the subgroup 
spanned by the columns $X$; that is, the abelian group whose invariant factor decomposition
is given by the Smith normal form of $X$. 
If  $A$ is the adjacency matrix (with respect to any ordering
of the vertices) of a graph, then the Smith group of the graph
is defined as the Smith group of $A$, and does not depend on the ordering
on the vertices.
 
We recall that two integral matrices $X$ and $Y$ are  \emph{integrally equivalent}
if there exist unimodular integral matrices $U$ and $V$ such that
\begin{equation}\label{UXV}
UXV=Y.
\end{equation}
As is well known, $X$ and $Y$ are integrally equivalent if
and only if  $Y$ can be obtained from  $X$ by a finite sequence of integral 
unimodular row and column operations.
A {\it diagonal form} for $X$
is a matrix integrally equivalent to $X$ that has nonzero entries only
on the leading diagonal. The Smith normal form of $X$ is one such diagonal form.
Another way to describe the Smith group is in terms of the  $p$-elementary divisors of $X$ with respect to primes $p$.
Any diagonal form for $X$  gives a cyclic decomposition of the Smith group,
so in a certain sense, the various  diagonal forms  carry  the same information
as the list of $p$-elementary divisors as $p$ varies over all primes.  

The notion of integral equivalence can be generalized to $\Z_{(p)}$-equivalence, where $\Z_{(p)}$
is the ring of $p$-local integers, by requiring that the matrices  $U$ and $V$
appearing in (\ref{UXV}) be invertible over $\Z_{(p)}$. We can also consider the
$p$-elementary divisors of any matrix $X$ with entries in $\Z_{(p)}$.
If $X$ happens to have integer entries then its $p$-elementary
divisors are the same whether it is considered as a matrix over $\Z$
or $\Z_{(p)}$

Let $A$ be an adjacency matrix for $Q_n$.
It was proved in \cite{DJ} that for every odd prime $p$, $A$ is $\Z_{(p)}$-equivalent to
the diagonal matrix of  the eigenvalues (all of which are integers).

When $n$ is odd, all the eigenvalues are odd integers, so $\det(A)$
is an odd integer. Thus, the eigenvalue  matrix is  a diagonal form when $n$ is odd.

 When $n$ is even,  
 there remains the problem of finding the $2$-elementary divisors 
 of $A$.  A conjecture for the multiplicity of each power $2^e$ as
a $2$-elementary divisor was stated in \cite{DJ}. (See Conjecture~\ref{conjDJ} below.)
The purpose of this paper is to give a proof of this conjecture. 
As a consequence of the conjecture, we obtain the following
description of the Smith group of $Q_n$ when $n$ is even.
 
\begin{theorem}\label{Qdiag} Suppose that $n=2m$ is even. Then the adjacency matrix
 $A$ of the $n$-cube $Q_n$ is  integrally equivalent to a diagonal matrix 
 with $\tbinom{n}{m}$ diagonal entries equal to zero and 
 whose nonzero diagonal entries are the integers $k=1$,$2$,\dots $m$, in which the multiplicity of $k$ is $2\tbinom{n}{m-k}$.
  
\end{theorem}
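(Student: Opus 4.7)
The plan is to reduce Theorem~\ref{Qdiag} to a prime-by-prime comparison of $p$-elementary divisors, and then invoke the \cite{DJ} odd-prime result together with the $2$-adic conjecture of \cite{DJ} (restated below as Conjecture~\ref{conjDJ}), the proof of which is the substantive content of this paper. Two integral matrices of the same size are integrally equivalent if and only if they have the same rank and, for every prime $p$, the same multiplicities of each power $p^e$ as a $p$-elementary divisor. So Theorem~\ref{Qdiag} will follow once I check that $A$ and the claimed diagonal matrix $D$ match in rank and agree $p$-locally at every prime.

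For odd primes $p$, \cite{DJ} gives that $A$ is $\Z_{(p)}$-equivalent to $\diag(n-2i)_{i=0}^n$ with $n-2i$ repeated $\binom{n}{i}$ times. Regrouping, the nonzero eigenvalues are $\pm 2k$ with multiplicity $\binom{n}{m-k}$ for $k=1,\ldots,m$, together with $\binom{n}{m}$ zeros; since $p$ is odd we have $v_p(\pm 2k)=v_p(k)$, so the multiplicity of $p^e$ as a $p$-elementary divisor of $A$ is
\[
\sum_{\substack{1\le k\le m\\ v_p(k)=e}} 2\binom{n}{m-k},
\]
which is exactly its multiplicity for $D$. The rank equality $\rank A = 2^n-\binom{n}{m}$ agrees with the number of nonzero diagonal entries of $D$ via the identity $\sum_{k=1}^m 2\binom{n}{m-k} = 2^n-\binom{n}{m}$, which follows from $\binom{n}{i}=\binom{n}{n-i}$.

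For $p=2$, invoke Conjecture~\ref{conjDJ}. Once it is established, the multiplicity of $2^e$ as a $2$-elementary divisor of $A$ takes the same shape $\sum_{v_2(k)=e,\,1\le k\le m}2\binom{n}{m-k}$, matching $D$ again. Combining the odd-prime and $2$-adic pieces gives the claimed integral equivalence.

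The main obstacle is the proof of Conjecture~\ref{conjDJ}, not this bookkeeping reduction. The essential difficulty at $p=2$ is that every nonzero eigenvalue of $A$ is even, so, unlike in the odd-prime case, the eigenvalue matrix is \emph{not} $\Z_{(2)}$-equivalent to $A$, and the $2$-elementary divisors cannot be read off the spectrum. One must instead analyze the $\Z_{(2)}$-lattice spanned by the rows of $A$ intrinsically, presumably by exploiting the symmetries of $Q_n$ — the translation group $(\Z/2)^n$ and the coordinate action of $S_n$ — to decompose the problem into $\F_2$-isotypic pieces whose dimensions match the conjectured multiplicities.
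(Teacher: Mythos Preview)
Your reduction is correct and is essentially the paper's own proof of Theorem~\ref{Qdiag}: the paper likewise handles odd primes by quoting \cite{DJ} and regrouping the eigenvalue diagonal, and handles $p=2$ via the main technical result (stated there as Theorem~\ref{Adiag}, from which Conjecture~\ref{conjDJ} is then deduced as a corollary rather than the reverse, but the two are equivalent once the rank is known). One remark on your closing speculation: the paper's attack on the $2$-adic problem does not go through an $\F_2$-isotypic decomposition under $(\Z/2)^n\rtimes S_n$; instead it passes to the monomial basis of $\Z^{Q_n}\cong\Z[X_1,\dots,X_n]/(X_i^2-X_i)$, where $A$ becomes block upper-triangular with scalar diagonal blocks $(n-2i)I$ and off-diagonal blocks the subset-inclusion matrices $W_{i-1,i}$, applies Bier's uniform refinement of Wilson's diagonal form to all the $W_{i-1,i}$ simultaneously, and then runs a recursive $\Z_{(2)}$-row/column argument to kill the even scalar diagonal.
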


\section{Inclusion of subsets of a finite set}
Let $n$ be a positive integer and 
$X=\{1,2,\ldots, n\}$. For brevity, we shall use the term
\emph{$k$-subsets} for the subsets of $X$ of size $k$.
For $k\leq n$, let $M_k$ denote the free $\Z$-module on the set of $k$-subsets
and for $t$, $k\leq n$ let
$$
\eta_{t,k}: M_t\to M_k
$$
be the incidence map, induced by inclusion. Thus, if $t\leq k$ 
a $t$-subset is mapped to the sum of all $k$-subsets containing it,
while if $t\geq k$ the image of a $t$-subset is the sum of
all $k$-subsets which it contains.

For each $k\leq n$, if we fix ordering on the $k$-subsets, 
we can think of elements of  $M_k$ as row vectors.  Let $W_{t,k}$  denote the  $\tbinom{n}{t}\times\tbinom{n}{k}$  matrix of $\eta_{t,k}$ with respect to these ordered bases of $M_t$ and $M_k$.

\section{Canonical Bases for subset modules}\label{canon}
The notion of the \emph{rank} of a subset was introduced by Frankl \cite{Frankl}.
We shall only need the concept of a $t$-subset of rank $t$, for $t\leq \frac{n}{2}$.
let $T=\{i_1,i_2,\ldots, i_t\}\subseteq X$, with the elements in increasing order.
Then $T$ has rank $t$ if and only if $i_j\geq 2j$ for all $j=1$,\dots,$t$.
A $t$-subset is of rank $t$ if an only if it is
the set of entries in the second row of a standard 
Young tableau of shape $[n-t,t]$. This is one way to see that
the number of $t$-subsets of rank $t$ is $\tbinom{n}{t}-\tbinom{n}{t-1}$.

Assume $0\leq j\leq k\leq \frac{n}{2}$.
Let $E_{j,k}$ denote the $[\tbinom{n}{j}-\tbinom{n}{j-1}]\times \tbinom{n}{k}$
submatrix of $W_{j,k}$ formed from the 
the rows labeled by $j$-subsets of rank $j$, and let
$E_k$ be the $\tbinom{n}{k}\times \tbinom{n}{k}$ matrix formed by
stacking the $E_{j,k}$, $0\leq j\leq k$,  with $j$ increasing as we move down the
matrix $E_k$. 


Wilson \cite{Wilson} found a diagonal form for $W_{t,k}$. We shall
state his result for $t\leq k\leq n/2$.
There exist unimodular matrices $U_{t,k}$
and $V_{t,k}$ such that  
\begin{equation}\label{UV}
U_{t,k}W_{t,k}V_{t,k}= D_{t,k},
\end{equation}
where the diagonal form $D_{t,k}$ has diagonal entries
$\tbinom{k-j}{t-j}$,  with multiplicity $\tbinom{n}{j}-\tbinom{n}{j-1}$, for
$j=0$, \dots, $t$.  
Bier \cite{Bier} refined Wilson's results, showing that
we can take $U_{t,k}=E_t$ for all $k$ and $V_{t,k}={E_k}^{-1}$ for all $t$. 
The additional uniformity will be important for us.

\begin{theorem}\label{Bier}\cite{Bier}
Assume $k\leq \frac{n}{2}$. Then the matrix $E_k$ is unimodular.
Furthermore, for  $t\leq k$, we have
\begin{equation}
E_t W_{t,k} {E_k}^{-1} = D_{t,k},
\end{equation}
where $D_{t,k}$ is Wilson's diagonal form.
\end{theorem}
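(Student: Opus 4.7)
The approach has two main ingredients: a combinatorial identity for compositions of inclusion maps, and a block-triangular reduction of $E_k$ that reduces unimodularity to a question about a smaller matrix.

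First I would prove the identity
\begin{equation*}
W_{j,t} W_{t,k} = \binom{k-j}{t-j}\, W_{j,k}
\end{equation*}
for all $0 \le j \le t \le k \le n/2$, by direct double counting: for a $j$-subset $A$ and a $k$-subset $C$, the $(A,C)$-entry of $W_{j,t} W_{t,k}$ counts $t$-subsets $B$ with $A \subseteq B \subseteq C$, which equals $\binom{k-j}{t-j}$ if $A \subseteq C$ and zero otherwise. Since, by definition, the $(j,T)$-th row of $E_t$ is the $T$-row of $W_{j,t}$, this identity immediately yields $E_t W_{t,k} = D_{t,k} E_k$, where $D_{t,k}$ is read as the rectangular $\binom{n}{t} \times \binom{n}{k}$ matrix whose initial $\binom{n}{t} \times \binom{n}{t}$ block is the diagonal matrix with entry $\binom{k-j}{t-j}$ in the rank-$j$ rows ($j=0,\dots,t$) and whose remaining columns are zero.

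It then remains to prove $E_k$ is unimodular. The key observation is that for $j=k$, a rank-$k$ $k$-subset $T$ has only itself as a $k$-superset, so the row of $E_{k,k}$ indexed by $T$ is the standard basis vector $e_T$. If we reorder the rows of $E_k$ so that the block $E_{k,k}$ is placed last, and reorder the columns so that the rank-$k$ $k$-subsets come last, then $E_k$ takes the block form
\begin{equation*}
\begin{pmatrix} A & B \\ 0 & I \end{pmatrix},
\end{equation*}
where the bottom-right identity block has size $\binom{n}{k} - \binom{n}{k-1}$, and $A$ is a square $\binom{n}{k-1} \times \binom{n}{k-1}$ matrix whose rows are indexed by pairs $(j,T)$ with $0 \le j \le k-1$ and $T$ a rank-$j$ $j$-subset, and whose columns are indexed by the $k$-subsets of rank less than $k$. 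Thus $\det(E_k) = \pm\det(A)$, and the problem reduces to showing $A$ has determinant $\pm 1$.

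The main obstacle is this final step. I would attempt it by induction on $k$, using a combinatorial bijection between $k$-subsets of rank less than $k$ and $(k-1)$-subsets (for instance, by deleting an element witnessing the rank deficiency) to relabel the columns of $A$ and compare $A$ with $E_{k-1}$. In small examples such as $n=4$, $k=2$, the relabeled matrix is not literally $E_{k-1}$, but it differs from it by a sequence of unimodular row operations that can be read off from the level-$(k-1)$ instance of the key identity already established. Making this reduction systematic (in particular, describing the correct unimodular correction explicitly in terms of the smaller maps $W_{j,k-1}$) will be the technical heart of the argument. Once $E_k$ is known to be unimodular, the stated factorization $E_t W_{t,k} E_k^{-1} = D_{t,k}$ is immediate from $E_t W_{t,k} = D_{t,k} E_k$ by right-multiplication by $E_k^{-1}$.
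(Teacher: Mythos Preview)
The paper does not give its own proof of this theorem; it is quoted from \cite{Bier} and used as a black box. So there is no in-paper argument to compare your proposal against directly.

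On the proposal itself: your derivation of $E_t W_{t,k} = D_{t,k} E_k$ from the composition identity $W_{j,t}W_{t,k} = \binom{k-j}{t-j}W_{j,k}$ is correct and is the standard first step. Your block reduction, isolating the identity block coming from $E_{k,k}$ and reducing to the $\binom{n}{k-1}\times\binom{n}{k-1}$ minor $A$, is also correct.

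The genuine gap is the one you name yourself: you have not shown $\det A = \pm 1$, and the inductive scheme you sketch is not yet an argument. Observe that the relation $E_{k-1}W_{k-1,k}=D_{k-1,k}E_k$ does link $E_k$ to $E_{k-1}$, but $D_{k-1,k}$ has diagonal entries $k,k-1,\dots,1$, none of which (except $1$) are units, so unimodularity does not transfer along this relation for free; some new combinatorial input is required. The idea of a bijection between $k$-subsets of rank below $k$ and $(k{-}1)$-subsets is in the right spirit, but ``delete an element witnessing the rank deficiency'' is not canonically defined, and checking that the relabelled matrix differs from $E_{k-1}$ only by unimodular operations is precisely the missing content. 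Bier's own proof supplies this missing combinatorics via Frankl's general rank function, producing an explicit bijection between the row index set $\{(j,T):T\ \text{rank-}j,\ j\le k\}$ and the set of $k$-subsets under which $E_k$ can be seen to have determinant $\pm 1$; if you want to complete your approach, that bijection (or an equivalent one) is what you must construct and verify.
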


We shall refer to the basis of $M_k$ corresponding to
the rows of $E_k$ as the \emph{canonical basis} of $M_k$.
It consists of all vectors of the form $\eta_{j,k}(J)$, where
$0\leq j\leq k$ and $J$ is a $j$-subset of rank $j$.

\section{The $n$-cube}\label{ncube}
Let $Q_n$ denote the $n$-cube graph. The vertex set of $Q_n$ is $\{0,1\}^n$
and $(a_1,\ldots,a_n)$ is adjacent to $(b_1,\ldots,b_n)$ if and only if
there is exactly one index $j$ with $a_j\neq b_j$. There is clearly a bijection
of the vertex set with the set of subsets of $X=\{1,2,\dots,n\}$,
under which a vertex corresponds to the subset of indices where the vertex has entry $1$.
We use this bijection and our fixed ordering of $k$-subsets
for $k\leq n$ to order the vertices of $Q_n$, taking the subsets in order of increasing 
size. Let $A$ denote the adjacency matrix, with respect to this ordering. 

Next we review the results of \cite{DJ}.
By viewing the vertex set of $Q_n$ as $\F_2^n$, and transforming $A$ by the 
character table of the additive group $\F_2^n$ 
the eigenvalues of $Q_n$ are found to be $n-2\ell$, 
with multiplicity $\tbinom{n}{\ell}$ for $0\leq\ell\leq n$.

When $n$ is odd, we see that $\det A$ is odd; hence all elementary divisors are odd.
Then since $\F_2^n$ is an abelian $2$-group, the same
discrete Fourier transform method yields the elementary divisors.
When $n$ is even, one still obtains the $p$-elementary divisors for all
odd $p$. The $2$-elementary divisors were not computed in \cite{DJ}, but
the following conjecture was stated: 

\begin{conjecture}\label{conjDJ}\cite[4.4.1]{DJ} Suppose $n$ is even. Then the multiplicity
of $2^i$ as a $2$-elementary divisor of $A$ is equal to the number of eigenvalues
of $A$ whose exact $2$-power divisor is $2^{i+1}$. 
\end{conjecture}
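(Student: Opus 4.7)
The strategy is to express $A=\delta+\partial$ in Bier's canonical basis and carry out a $2$-adic Smith normal form computation. I would equip each $V_k$ with $k\le n/2$ with Bier's canonical basis $\{\eta_{j,k}(J):0\le j\le k,\ J\text{ a rank-}j\text{ subset}\}$, and each $V_k$ with $k>n/2$ with the complementary basis $\{\psi_{j,k}(J):=\tau\eta_{j,n-k}(J)\}$ obtained via the subset complementation $\tau$. Grouping the basis elements by the rank-$j$ subset $J$ produces an integral ``chain'' $C(J)$ of length $n-2j+1$ inside $V=\bigoplus_k V_k$, and the union of the $C(J)$ over all $j$ and all rank-$j$ subsets is a basis of $V$.

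Direct computation from Bier's formulas yields $\delta\,\eta_{j,k}(J)=(k+1-j)\,\eta_{j,k+1}(J)$ and $\partial\,\eta_{j,k}(J)=(n-k-j+1)\,\eta_{j,k-1}(J)+\sum_{i\in J}\eta_{j-1,k-1}(J\setminus\{i\})$, with dual identities on the $\psi$-side. From this one sees that $A$ preserves the rank filtration $F_r=\bigoplus_{\mathrm{rank}(J)\le r}C(J)$: the diagonal block for each $C(J)$ is an explicit tridiagonal matrix $T_j$ depending only on $j=\mathrm{rank}(J)$, and the off-diagonal blocks arise both from the coupling $\sum_i(\cdots)$ in $\partial$ and, at the crossover $k=n/2$, from the explicit expansion of $\eta_{j,m+1}(J)$ in the $\psi$-basis of $V_{m+1}$.

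The remaining and crucial step is to reduce this block upper-triangular form to a diagonal matrix over $\mathbb{Z}_{(2)}$. A direct sum of the Smith normal forms of the blocks $T_j$ does not give the multiplicities of Conjecture~\ref{conjDJ}: in general the naive chain contributions contain excess powers of $2$, and the off-diagonal coupling to lower-rank chains must be used to cancel these excess factors via unimodular $\mathbb{Z}_{(2)}$-row and column operations. This is where the main technical obstacle lies: one must carefully track how the coupling $\sum_i\eta_{j-1,k-1}(J\setminus\{i\})$, together with the $\eta$-to-$\psi$ crossover terms, combines with the diagonal entries of each $T_j$ throughout the reduction, and show that the net effect produces exactly the conjectured multiplicities. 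Once the reduction is carried out, summing the contributions across all chains $C(J)$ yields $\sum_{\ell:\,v_2(m-\ell)=i}\binom{n}{\ell}$ for the multiplicity of $2^i$, matching the number of eigenvalues of $A$ with exact $2$-part $2^{i+1}$ and thereby proving the conjecture.
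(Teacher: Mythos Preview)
Your proposal is a strategy, not a proof. You explicitly flag the reduction of the block upper-triangular matrix to diagonal form over $\Z_{(2)}$ as ``the main technical obstacle'' and then do not carry it out; you only assert that the off-diagonal coupling ``must be used to cancel'' excess $2$-powers in the chain blocks $T_j$, without giving any mechanism or computation showing that it does. Since this reduction is the entire content of the conjecture, what you have written is an outline of a possible approach, with the hard step missing.

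There is also a structural difference that makes your route harder than the paper's. You work with $A=\delta+\partial$ in the subset basis, which forces you to handle both the $\partial$-coupling $\sum_{i\in J}\eta_{j-1,k-1}(J\setminus\{i\})$ to lower-rank chains and the $\eta$-to-$\psi$ crossover at $k=m$. The paper avoids both difficulties by first passing to the monomial basis $X_I=\prod_{i\in I}X_i$ of $\Z[X_1,\dots,X_n]/(X_i^2-X_i)$, in which
\[
\alpha(X_I)=(n-2\abs{I})X_I+\sum_{\substack{J\subset I\\ \abs{J}=\abs{I}-1}}X_J.
\]
This is already block upper-triangular by subset size, with no down-maps at all; in particular $\tilde A=\begin{bmatrix}M&0\\0&N\end{bmatrix}$ with $M$ and $N^t$ integrally equivalent, so no crossover arises. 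Applying Bier's change of basis to $M$ alone then produces a matrix whose only nonzero blocks are even scalar diagonals $(n-2i)I$ and Wilson's diagonal forms $D_{i-1,i}$, with the different rank strata completely decoupled. The remaining work is to kill the even diagonal entries $2$-adically, which the paper does by a short induction on $m$. The monomial-basis trick thus eliminates precisely the coupling and crossover terms that constitute your unexecuted ``main technical obstacle.''
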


\section{Bases for the free module on $Q_n$ and matrix representations
of adjacency} 
Let $\Z^{Q_n}$ denote the free $\Z$-module on the set of vertices of $Q_n$. 
The matrix $A$  can be viewed as an endomorphism of $\Z^{Q_n}$,
sending a vertex to the sum of all adjacent vertices.
It is important for us to adopt a slightly different point of view. We can
think of $\Z^{Q_n}$ as the ring of $\Z$-valued functions on the set of vertices of $Q_n$. 
Then the matrix $A$ defines the map $\alpha$ such that for any function
$f\in\Z^{Q_n}$ we have
\begin{equation}
\alpha(f)(a_1,\ldots,a_n)=\sum_{i=1}^n f(a_1,\ldots a_{i-1},1-a_i,a_{i+1},\ldots,a_n),
\end{equation}
for $(a_1,\ldots,a_n)\in Q_n$.
If we further regard the set $\{0,1\}^n$ of vertices of $Q_n$ as a subset of
$\Z^n$,  then functions are restrictions of polynomials and
we have a ring isomorphism of $\Z^{Q_n}$ with
$$
\Z[X_1,\ldots,X_n]/({X_i}^2-X_i,\, 1\leq i\leq n).
$$
Now a different
natural basis becomes evident, namely the set of
monomials $X_I=\prod_{i\in I}X_i$, for $I\subseteq X=\{1,\ldots,n\}$.
With respect to the monomial basis we have
\begin{equation}
\alpha(X_I)= \sum_{i\in I} (X_{I\setminus\{i\}}-X_I) + \sum_{i\notin I} X_I
=(n-2\abs{I})X_I + \sum_{\begin{smallmatrix}J\subset I\\ \abs{J}=\abs{I}-1\end{smallmatrix}
}X_J. 
\end{equation}
Therefore, if we order monomials in the same way as we ordered
subsets, the matrix of $\alpha$ with respect to this basis has the form

\begin{equation*}
\tilde A=
\begin{pmat}[{|||||||}]
 nI&W_{0,1}&0&0&0&\hdots&0&0\cr\-
 0&(n-2)I&W_{1,2}&0&0&\hdots&0&0\cr\-
 0&0&(n-4)I&W_{2,3}&0&\hdots&0&0\cr\-
 0&0&0&(n-6)I&W_{3,4}&\hdots&0&0\cr\-
 \vdots&\vdots&\ddots&\ddots&\ddots&\ddots&\vdots&\vdots\cr\-
 0&0&\hdots&0&0&-(n-4)I&W_{n-2,n-1}&0\cr\-
 0&0&\hdots&0&0&0&-(n-2)I&W_{n-1,n}\cr\-
 0&0&\hdots&0&0&0&0&-nI\cr
 \end{pmat}.
\end{equation*}

Assume from now on that $n=2m$ is even.

\begin{equation}\label{MNmatrix}
\tilde A=\begin{bmatrix}M&0\\0&N
\end{bmatrix}
\end{equation}

\begin{equation*}
M=
\begin{pmat}[{|||||||}]
 nI&W_{0,1}&0&0&0&\hdots&0&0\cr\-
 0&(n-2)I&W_{1,2}&0&0&\hdots&0&0\cr\-
 0&0&(n-4)I&W_{2,3}&0&\hdots&0&0\cr\-
 0&0&0&(n-6)I&W_{3,4}&\hdots&0&0\cr\-
 \vdots&\vdots&\vdots&\ddots&\ddots&\ddots&\vdots&\vdots\cr\-
 0&\hdots&\hdots&\hdots&\ddots&4I&W_{m-2,m-1}&0\cr\-
 0&\hdots&\hdots&\hdots&\hdots&0&2I&W_{m-1,m}\cr
 \end{pmat}.
\end{equation*}

\begin{equation*}
N=\begin{pmat}[{||||||}]
W_{m,m+1}&0&\hdots&0&0&0&0\cr\-
-2I&W_{m+1,m+2}&\hdots&0&0&0&0\cr\-
0&-4I&\ddots&\vdots&\vdots&\vdots&\vdots\cr\-
\vdots&\ddots&\ddots&W_{n-4,n-3}&0&0&0\cr\-
\vdots&\vdots&\ddots&-(n-6)I&W_{n-3,n-2}&0&0\cr\-
\vdots&\vdots&\hdots&0&(-(n-4)I&W_{n-2,n-1}&0\cr\-
\vdots&\vdots&\hdots&0&0&-(n-2)I&W_{n-1,n}\cr\-
0&0&\hdots&0&0&0&-nI\cr
 \end{pmat}.
\
\end{equation*}

Due to the block form (\ref{MNmatrix}) of $\tilde A$, the multiplicity
of a prime power as an elementary divisor of  $\tilde A$
is the sum of its multiplicites in $M$ and $N$.

Up to now the choice of orderings on the $j$-susbsets, $0\leq j\leq n$
used in the definition of the inclusion matrices $W_{t,k}$ 
has been an arbitrary (but fixed) one. Any choice would result in matrices of the form
$M$ and $N$ as above, but the rows and columns of the submatrices
$W_{t,k}$ would be permuted. 

Now we shall specify these orderings more carefully.
The matrix $M$ involves only the matrices $W_{t,k}$ with $0\leq t<k\leq m$,
while the matrix $N$ involves only the matrices $W_{t,k}$ with
$m\leq t<k\leq n$. We start from the arbitrary but
fixed ordering on the $j$-subsets with $0\leq j\leq m$ that led to
the matrix $M$. Then for $0\leq j<m$ we choose the ordering of $(n-j)$-subsets
to be the order induced by the complementation map. In this way
we have specified an ordering on the $j$-subsets, for all $j$. 
Finally we wish to consider a \emph{second} ordering on $m$-subsets,
namely, the ordering defined from the given ordering by complementation.
We use the first ordering on $m$-sets to define  the submatrix $W_{m-1,m}$ of $M$
and the second ordering to define the submatrix $W_{m,m+1}$ of $N$.
From the block form (\ref{MNmatrix}), we see that the matrix $\tilde A$
thus constructed differs from the one in which the same ordering on
$m$-sets is used for both $W_{m-1,m}$ and $W_{m,m+1}$ 
only by a permutation of the rows in the first row-block of $N$, so
the two matrices would be integrally equivalent.

The reason we have been careful to choose the ordering as above is that
we now have, for $0\leq t<k\leq m$, 
\begin{equation}\label{transpose}
W_{n-k,n-t}=W^t_{k,t}.
\end{equation}

If we reverse the order of the block-rows and block-columns of $N$
and then take the transpose, we obtain

\begin{equation*}
\begin{aligned}
N'&=
\begin{pmat}[{|||||||}]
 -nI&W^t_{n-1,n}&0&0&0&\hdots&0&0\cr\-
 0&-(n-2)I&W^t_{n-2,n-1}&0&0&\hdots&0&0\cr\-
 0&0&-(n-4)I&W^t_{n-3,n-2}&0&\hdots&0&0\cr\-
 0&0&0&-(n-6)I&W^t_{n-4,n-3}&\hdots&0&0\cr\-
 \vdots&\vdots&\vdots&\ddots&\ddots&\ddots&\vdots&\vdots\cr\-
 0&\hdots&\hdots&\hdots&\ddots&-4I&W^t_{m+1,m+2}&0\cr\-
 0&\hdots&\hdots&\hdots&\hdots&0&-2I&W^t_{m,m+1}\cr
 \end{pmat}\\
&=
\begin{pmat}[{|||||||}]
 -nI&W_{0,1}&0&0&0&\hdots&0&0\cr\-
 0&-(n-2)I&W_{1,2}&0&0&\hdots&0&0\cr\-
 0&0&-(n-4)I&W_{2,3}&0&\hdots&0&0\cr\-
 0&0&0&-(n-6)I&W_{3,4}&\hdots&0&0\cr\-
 \vdots&\vdots&\vdots&\ddots&\ddots&\ddots&\vdots&\vdots\cr\-
 0&\hdots&\hdots&\hdots&\ddots&-4I&{W_{m-2,m-1}}&0\cr\-
 0&\hdots&\hdots&\hdots&\hdots&0&-2I&W_{m-1,m}\cr
 \end{pmat}.
\end{aligned}
\end{equation*}

Thus, $N'$ differs from $M$ only by the sign of the diagonal
entries. In fact, to see that $N'$ is integrally
equivalent to $M$, we perform the following simple sequence of unimodular operations.
First mutliply the first block-column by -1, then multiply the second block-row
by -1, then the third block-column, etc.,  until we reach the bottom-right
of the matrix, at which point $N'$ has been converted to $M$.

We have established the following reduction.

\begin{lemma}\label{half} Let $M$ and $N$ be the matrices in (\ref{MNmatrix}).
Then $M$ and $N^t$ are integrally equivalent.
In particular the multiplicity of an elementary divisor of $\tilde A$ 
(and hence of $A$) is twice its multplicity as an elementary divisor of $M$. 
\end{lemma}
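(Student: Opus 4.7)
The plan is to exploit the duality between a subset and its complement to relate $N$ to the transpose of $M$. The bijection $S \mapsto X\setminus S$ between $k$-subsets and $(n-k)$-subsets transforms the inclusion relation between a $t$-subset and a $k$-subset (with $t<k$) into the reverse inclusion relation between their complements. Concretely, once we fix orderings on $j$-subsets for $j\le m$ and use the complementation-induced ordering on $j$-subsets for $j>m$, we obtain the key identity $W_{n-k,n-t}=W^t_{k,t}$ for $0\le t<k\le m$, which is precisely what the paper sets up in (\ref{transpose}).

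Next I would reverse the order of the block-rows and block-columns of $N$ (these are unimodular permutation operations) and take the transpose, producing a matrix $N'$. By the identity above, each off-diagonal block of $N'$ agrees with the corresponding off-diagonal block of $M$: $W^t_{n-1-j,\,n-j}$ becomes $W_{j,j+1}$. The diagonal scalar blocks line up with those of $M$ as well, except that every diagonal block carries the opposite sign. So at this point the problem reduces to showing that $M$ and the matrix obtained from $M$ by negating every diagonal scalar block are integrally equivalent.

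To remove the sign discrepancy, I would apply an alternating sequence of $(-1)$-scalings. Multiplying the first block-column by $-1$ flips the signs of the first diagonal block and the first off-diagonal block; to restore the off-diagonal block, multiply the second block-row by $-1$, which simultaneously flips the sign of the second diagonal block; then multiply the third block-column by $-1$, and so on. Sweeping left to right, all off-diagonal blocks return to their original form while all diagonal blocks get flipped, converting $N'$ into $M$. Since transposition preserves elementary divisors and $\tilde A$ is block-diagonal with blocks $M$ and $N$, the multiplicity of each elementary divisor of $\tilde A$ is twice its multiplicity in $M$, giving the final assertion.

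The main obstacle, and the reason for the careful bookkeeping in the preceding section, is getting the orderings on subsets aligned so that the transpose identity $W_{n-k,n-t}=W^t_{k,t}$ really holds on the nose; without the deliberate use of complementation to order the large subsets (and the separate choice for the middle level that corresponds to the break between $M$ and $N$), the blocks of $N^t$ would only match those of $M$ up to permutations of rows and columns, forcing extra unimodular adjustments. Once that setup is in place, the remainder is the straightforward sign-chasing described above.
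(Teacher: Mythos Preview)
Your proposal is correct and follows the paper's argument essentially verbatim: reverse the block order of $N$, transpose, use the complementation identity (\ref{transpose}) to match the off-diagonal blocks with those of $M$, and then alternate $(-1)$-scalings of block-columns and block-rows to repair the diagonal signs. Your narration of the sign-chase contains a small slip---multiplying the first block-column of the upper-bidiagonal $N'$ affects only the $(1,1)$ block, not any off-diagonal block---but the alternating column/row scheme you intend is exactly the one the paper uses and it does convert $N'$ into $M$.
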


From now, we focus on the matrix $M$.
Let $E_j$ for $0\leq j\leq m$ be defined as  in \S~\ref{canon}
and for $0\leq k\leq m$, set
\begin{equation*}
E(k)=\diag({E_0,E_1,\ldots,E_k}).
\end{equation*}

Then from Theorem~\ref{Bier} we immediately obtain:

\begin{equation}\label{Jmatrix}
E(m-1)\cdot M\cdot E(m)^{-1}=
\begin{pmat}[{|||||||}]
 nI&D_{0,1}&0&0&0&\hdots&0&0\cr\-
 0&(n-2)I&D_{1,2}&0&0&\hdots&0&0\cr\-
 0&0&(n-4)I&D_{2,3}&0&\hdots&0&0\cr\-
 0&0&0&(n-6)I&D_{3,4}&\hdots&0&0\cr\-
 \vdots&\vdots&\vdots&\ddots&\ddots&\ddots&\vdots&\vdots\cr\-
 0&\hdots&\hdots&\hdots&\ddots&4I&D_{m-2,m-1}&0\cr\-
 0&\hdots&\hdots&\hdots&\hdots&0&2I&D_{m-1,m}\cr
 \end{pmat}.
\end{equation}

For example when $n=4$, we have

\begin{equation*}
M=
\begin{pmat}[{|...|.....}]
4& 1 &1& 1& 1& 0& 0& 0 &0 &0& 0\cr\-
0& 2& 0& 0& 0& 1& 1& 0& 1& 0& 0\cr
0& 0& 2& 0& 0& 1& 0& 1& 0& 1& 0\cr
0& 0& 0& 2& 0& 0& 1& 1 &0 &0 &1\cr
0& 0& 0& 0& 2& 0& 0& 0& 1& 1& 1\cr
\end{pmat}
\end{equation*}

\begin{equation*}
E(1)=
\begin{pmat}[{|...}]
1&0& 0& 0& 0\cr\-
0&1 &1 &1 &1\cr
0&0 &1 &0 &0\cr
0&0 &0 &1 &0\cr
0&0 &0 &0 &1\cr
\end{pmat}
\end{equation*}

\begin{equation*}
E(2)=
\begin{pmat}[{|...|.....}]
1&0&0&0&0&0&0&0&0&0&0\cr\-
0&1&1&1&1&0&0&0&0&0&0\cr
0&0&1&0&0&0&0&0&0&0&0\cr
0&0&0&1&0&0&0&0&0&0&0\cr
0&0&0&0&1&0&0&0&0&0&0\cr\-
0&0&0&0&0&1&1&1&1&1&1\cr
0&0&0&0&0&1&0&1&0&1&0\cr
0&0&0&0&0&0&1&1&0&0&1\cr
0&0&0&0&0&0&0&0&1&1&1\cr
0&0&0&0&0&0&0&0&0&1&0\cr
0&0&0&0&0&0&0&0&0&0&1\cr
\end{pmat}
\end{equation*}

\begin{equation*}
E(1)\cdot M\cdot E(2)^{-1}=
\begin{pmat}[{|...|.....}]
4&1&0&0&0&0&0&0&0&0&0\cr\- 
0&2&0&0&0&2&0&0&0&0&0\cr
0&0&2&0&0&0&1&0&0&0&0\cr
0&0&0&2&0&0&0&1&0&0&0\cr
0&0&0&0&2&0&0&0&1&0&0\cr
\end{pmat}
\end{equation*}

We denote the matrix in (\ref{Jmatrix}) by $B$ and let $B'$ be the
matrix obtained by zeroing out the diagonal. Thus,

\begin{equation}\label{Jprimematrix}
B'=
\begin{pmat}[{||||||}]
 0&D_{0,1}&0&0&0&0&\hdots\cr\-
 0&0&D_{1,2}&0&0&0&\hdots\cr\-
 0&0&0&D_{2,3}&0&0&\hdots\cr\-
 0&0&0&0&D_{3,4}&0&\hdots\cr\-
 \vdots&\vdots&\vdots&\vdots&\ddots&\ddots&\ddots\cr
 0&\hdots&0&0&0&0&D_{m-1,m}\cr
\end{pmat}.
\end{equation}

We examine the matrix $D_{i-1,i}$ more closely. 
It has $\tbinom{n}{i-1}$ rows and $\tbinom{n}{i}$ columns
and has the form

\begin{equation}\label{Dij}
D_{i-1,i}=
\begin{pmat}[{||||||}]
 \mathbf{i}&\mathbf{0}&\mathbf{0}&\hdots&\mathbf{0}&\mathbf{0}&\mathbf{0}\cr\-
 \mathbf{0}&\mathbf{i-1}&\mathbf{0}&\hdots&\mathbf{0}&\mathbf{0}&\mathbf{0}\cr\-
 \mathbf{0}&\mathbf{0}&\mathbf{i-2}&\hdots&\mathbf{0}&\mathbf{0}&\mathbf{0}\cr\-
 \vdots&\vdots&\vdots&\ddots&\vdots&\vdots&\vdots\cr\-
 \mathbf{0}&\mathbf{0}&\mathbf{0}&\hdots&\mathbf{2}&\mathbf{0}&\mathbf{0}\cr\-
 \mathbf{0}&\mathbf{0}&\mathbf{0}&\hdots&\mathbf{0}&\mathbf{1}&\mathbf{0}\cr
\end{pmat},
\end{equation}
where a bold number $\mathbf{s}$, $s\neq 0$ represents a scalar matrix $sI$ 
of the appropriate size and $\mathbf{0}$ denotes a zero block of the appropriate size.
The block sizes are readily found; if $n_j=\tbinom{n}{j}-\tbinom{n}{j-1}$, then $D_{i-1,i}$ has $i+1$ block-columns and $i$ block-rows, and the $k$-th block-column of $D_{i-1,i}$ contains  $n_{k-1}$ columns, for $1\leq k\leq i+1$, while for $1\leq \ell\leq i$,
the $\ell$-th block-row contains $n_{\ell-1}$ rows. 

Also, let $\mathbf{(c)}$ denote a scalar matrix (more precisely the class of scalar matrices) whose
scalar is a multiple of $c$. (We introduce this notation because all that we shall
use about the diagonal entries is that they are even, and working with
the entire class of such matrices will facilitate the use of mathematical induction.)
Then we can write $B$ in the following form (more precisely, $B$ lies in the given class of matrices).
\begin{landscape}
\begin{equation}\label{BigJmatrix}
B=
\begin{pmat}[{|.|..|...|.|....|}]
 \mb{(2)}&\mb{1}&\mb{0}&\mb{0}&\mb{0}&\mb{0}&\mb{0}&\mb{0}&\mb{0}&\mb{0}&\hdots&\hdots&\mb{0}&\mb{0}&\hdots&\mb{0}&\mb{0}&\mb{0}&\mb{0}&\hdots&\mb{0}&\mb{0}&\mb{0}\cr\-
 \mb{0}&\mb{(2)}&\mb{0}&\mb{2}&\mb{0}&\mb{0}&\mb{0}&\mb{0}&\mb{0}&\mb{0}&\hdots&\hdots&\mb{0}&\mb{0}&\hdots&\mb{0}&\mb{0}&\mb{0}&\mb{0}&\hdots&\mb{0}&\mb{0}&\mb{0}\cr
\mb{0}&\mb{0}&\mb{(2)}&\mb{0}&\mb{1}&\mb{0}&\mb{0}&\mb{0}&\mb{0}&\mb{0}&\hdots&\hdots&\mb{0}&\mb{0}&\hdots&\mb{0}&\mb{0}&\mb{0}&\mb{0}&\hdots&\mb{0}&\mb{0}&\mb{0}\cr\-
 \mb{0}&\mb{0}&\mb{0}&\mb{(2)}&\mb{0}&\mb{0}&\mb{3}&\mb{0}&\mb{0}&\mb{0}&\hdots&\hdots&\mb{0}&\mb{0}&\hdots&\mb{0}&\mb{0}&\mb{0}&\mb{0}&\hdots&\mb{0}&\mb{0}&\mb{0}\cr
 \mb{0}&\mb{0}&\mb{0}&\mb{0}&\mb{(2)}&\mb{0}&\mb{0}&\mb{2}&\mb{0}&\mb{0}&\hdots&\hdots&\mb{0}&\mb{0}&\hdots&\mb{0}&\mb{0}&\mb{0}&\mb{0}&\hdots&\mb{0}&\mb{0}&\mb{0}\cr
 \mb{0}&\mb{0}&\mb{0}&\mb{0}&\mb{0}&\mb{(2)}&\mb{0}&\mb{0}&\mb{1}&\mb{0}&\hdots&\hdots&\mb{0}&\mb{0}&\hdots&\mb{0}&\mb{0}&\mb{0}&\mb{0}&\hdots&\mb{0}&\mb{0}&\mb{0}\cr\-
\vdots&\vdots&\vdots&\vdots&\vdots&\vdots&\ddots&\ddots&\ddots&\ddots&\ddots&\ddots&\vdots&\vdots&\vdots&\vdots&\vdots&\vdots&\vdots&\vdots&\vdots&\vdots&\vdots\cr
\vdots&\vdots&\vdots&\vdots&\vdots&\vdots&\ddots&\ddots&\ddots&\ddots&\ddots&\ddots&\vdots&\vdots&\vdots&\vdots&\vdots&\vdots&\vdots&\vdots&\vdots&\vdots&\vdots\cr
\vdots&\vdots&\vdots&\vdots&\vdots&\vdots&\ddots&\ddots&\ddots&\ddots&\ddots&\ddots&\vdots&\vdots&\vdots&\vdots&\vdots&\vdots&\vdots&\vdots&\vdots&\vdots&\vdots\cr\-
\vdots&\vdots&\vdots&\vdots&\vdots&\vdots&\vdots&\vdots&\vdots&\vdots&\ddots&\ddots&\ddots&\ddots&\ddots&\ddots&\ddots&\vdots&\vdots&\vdots&\vdots&\vdots&\vdots\cr
\vdots&\vdots&\vdots&\vdots&\vdots&\vdots&\vdots&\vdots&\vdots&\vdots&\ddots&\ddots&\ddots&\ddots&\ddots&\ddots&\ddots&\vdots&\vdots&\vdots&\vdots&\vdots&\vdots\cr\-
\mb{0}&\mb{0}&\mb{0}&\mb{0}&\mb{0}&\mb{0}&\mb{0}&\mb{0}&\mb{0}&\mb{0}&\hdots&\hdots&\mb{(2)}&\mb{0}&\hdots&\mb{0}&\mb{0}&\mb{m}&\mb{0}&\hdots&\mb{0}&\mb{0}&\mb{0}\cr
\mb{0}&\mb{0}&\mb{0}&\mb{0}&\mb{0}&\mb{0}&\mb{0}&\mb{0}&\mb{0}&\mb{0}&\hdots&\hdots&\mb{0}&\mb{(2)}&\hdots&\mb{0}&\mb{0}&\mb{0}&\mb{m-1}&\hdots&\mb{0}&\mb{0}&\mb{0}\cr
\mb{0}&\mb{0}&\mb{0}&\mb{0}&\mb{0}&\mb{0}&\mb{0}&\mb{0}&\mb{0}&\mb{0}&\hdots&\hdots&\vdots&\vdots&\ddots&\vdots&\vdots&\vdots&\vdots&\ddots&\vdots&\vdots&\vdots\cr
\mb{0}&\mb{0}&\mb{0}&\mb{0}&\mb{0}&\mb{0}&\mb{0}&\mb{0}&\mb{0}&\mb{0}&\hdots&\hdots&\mb{0}&\mb{0}&\hdots&\mb{(2)}&\mb{0}&\mb{0}&\mb{0}&\hdots&\mb{2}&\mb{0}&\mb{0}\cr
\mb{0}&\mb{0}&\mb{0}&\mb{0}&\mb{0}&\mb{0}&\mb{0}&\mb{0}&\mb{0}&\mb{0}&\hdots&\hdots&\mb{0}&\mb{0}&\hdots&\mb{0}&\mb{(2)}&\mb{0}&\mb{0}&\hdots&\mb{0}&\mb{1}&\mb{0}\cr
 \end{pmat}.
\end{equation}
\end{landscape}

\begin{lemma} $B$ and $B'$ are equivalent over the $2$-local integers $\Z_{(2)}$.
\end{lemma}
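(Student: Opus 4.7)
My plan is to first decompose $B$ and $B'$ into simpler components via their refined block structure, then prove the equivalence on each component.

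The first observation is that the sub-block structure of $B$ displayed in (\ref{BigJmatrix}) decouples according to the rank index $j$: both the diagonal sub-blocks (coming from $(n-2i)I$) and the super-diagonal sub-blocks (coming from $D_{i,i+1}$ as given by (\ref{Dij})) preserve the $j$ label, so a row sub-block of type $(i,j)$ has nonzero entries only in column sub-blocks of types $(i,j)$ and $(i+1,j)$. After a suitable row/column permutation, $B$ is therefore a block-diagonal matrix $\bigoplus_{j=0}^{m}(C_j \otimes I_{n_j})$, where $C_j$ is the $(m-j)\times(m-j+1)$ upper bidiagonal matrix with $k$-th diagonal entry $2(m-j-k)$ and $k$-th super-diagonal entry $k+1$; likewise $B'$ becomes $\bigoplus_{j=0}^{m}(C_j'\otimes I_{n_j})$ with $C_j'$ the diagonal-zeroed version of $C_j$. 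Since $\Z_{(2)}$-equivalence is preserved by direct sums and tensor with identity, the lemma reduces to showing $C_j\sim_{(2)}C_j'$ for each $j$.

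Setting $r=m-j$, the super-diagonal of $C_j$ is the run of consecutive integers $1,2,\ldots,r$, so of any two consecutive super-diagonal entries exactly one is odd, hence a unit in $\Z_{(2)}$. To eliminate a diagonal entry $2(r-k)$ at position $(R_k,c_k)$ there are two candidate pivots: the entry $k$ immediately above in column $c_k$ (usable via a row operation when $k$ is odd), and the entry $k+1$ to its right in row $R_k$ (usable via a column operation when $k+1$ is odd). At least one is always a $\Z_{(2)}$-unit. I would process the diagonal entries from right to left, i.e. $k=r-1, r-2,\ldots,0$, preferring the column operation whenever it is available; this is clean because by the time $k$ is processed, the column $c_{k+1}$ has been reduced to a single nonzero entry (the super-diagonal $k+1$ in row $R_k$), so the operation $c_k \leftarrow c_k - \tfrac{2(r-k)}{k+1}c_{k+1}$ introduces no fill-in.

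The main obstacle will be the row-operation case, forced when $k+1$ is even (and so $k$ is odd): the operation $R_k\leftarrow R_k - \tfrac{2(r-k)}{k}R_{k-1}$ creates a stray entry at $(R_k,c_{k-1})$ below the bidiagonal pattern, which must then be cleared by a later column operation using a unit super-diagonal pivot further to the right. Packaging this bookkeeping into a terminating algorithm is most naturally done by induction on $r$ with a strengthened hypothesis covering bidiagonal matrices with an arbitrary even diagonal and a consecutive-integer super-diagonal. A cleaner route I might prefer is to bypass the explicit operations and show instead that the $\Z_{(2)}$-cokernels of $C_j$ and $C_j'$ are both isomorphic to $\bigoplus_{k=1}^{r}\Z_{(2)}/(k)$: for $C_j'$ this is immediate from its diagonal form, while for $C_j$ it follows by a careful analysis of the relations $kf_{k-1}+2(r-k)f_k=0$ in the quotient---being careful not to divide by the coefficient $k$ when $k$ is even, since it need not be a unit in $\Z_{(2)}$.
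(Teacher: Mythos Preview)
Your decomposition $B\sim\bigoplus_{j}(C_j\otimes I_{n_j})$ is correct and is a genuine simplification that the paper does not isolate: because both the scalar diagonal blocks and the Wilson diagonal forms $D_{i-1,i}$ respect the rank index $j$, the condensed matrix $\overline B(m)$ really is, up to a permutation of rows and columns, the block sum $\bigoplus_{j=0}^{m}C_j$, and it is enough to treat each bidiagonal $C_j$ on its own. This is cleaner than the paper's setup, which keeps all the $C_j$'s interlaced inside $\overline B(m)$.

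Where your write-up stops short is exactly where the paper does the real work. You correctly note that eliminating a diagonal entry with an odd super-diagonal pivot by a row operation produces a stray entry one step to the left in $4\Z_{(2)}$, and you propose to absorb this via an induction on $r$ over ``bidiagonal matrices with arbitrary even diagonal and consecutive-integer super-diagonal.'' That is precisely the mechanism the paper uses, phrased for the full $\overline B(m)$: pivoting on every odd entry of every $\overline D_i$ kills all the $(2)$-diagonal entries, the odd-pivot rows/columns split off as a unimodular diagonal summand, and the residual submatrix---your stray entries together with the even super-diagonal entries---is exactly $2\overline B(\lfloor m/2\rfloor)\oplus 2\overline B(\lfloor (m-1)/2\rfloor)$, whence induction. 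Restricted to a single $C_j$ (with $r=m-j$), the identical argument shows that one round of odd-entry pivoting turns $C_j$ into a unit diagonal block plus $2C'$, where $C'$ is a matrix of the same bidiagonal type with $r'=\lfloor r/2\rfloor$; so your proposed induction does go through, and is the per-$j$ shadow of the paper's global recursion.

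Your alternative ``cokernel'' route is more speculative as stated. The relations $kf_{k-1}+2(r-k)f_k=0$ do not obviously yield $\bigoplus_k\Z_{(2)}/(k)$ without essentially re-running the same parity-based induction; the warning you give about not dividing by even $k$ is exactly the obstruction, and you have not indicated how to get around it other than by the row/column argument you already sketched. So the first route is the one that actually closes, and it coincides with the paper's proof once specialized to a single $j$-block.
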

\begin{proof}
Let $\overline B(m)$ denote a matrix (actually the class of matrices) in which all the blocks (bold numbers) in $B$ are replaced by the plain numbers, and each $\mb{(c)}$ is replaced by $(c)$, representing 
a multiple of the $2$-local integer $c$. It suffices to show that any matrix of the form $B(m)$ is
$2$-locally equivalent to the matrix obtained by zeroing out its diagonal.
Indeed, if there is a sequence of $\Z_{(2)}$-unimodular row and column operations
which zero out the diagonal of $\overline B(m)$, then the same sequence
of the blockwise versions of these operations will kill the diagonal of $B$.
So we may discard $B$ and  work only  with $\overline B(m)$ from now on.

\begin{equation}\label{Jbar}
\overline B(m)=
\begin{pmat}[{|||||||}]
 (2)I&\overline D_1&0&0&0&\hdots&0&0\cr\-
 0&(2)I&\overline D_2&0&0&\hdots&0&0\cr\-
 0&0&(2)I&\overline D_3&0&\hdots&0&0\cr\-
 0&0&0&(2)I&\overline D_4&\hdots&0&0\cr\-
 \vdots&\vdots&\vdots&\ddots&\ddots&\ddots&\vdots&\vdots\cr\-
 0&\hdots&\hdots&\hdots&\ddots&(2)I&\overline D_{m-1}&0\cr\-
 0&\hdots&\hdots&\hdots&\hdots&0&(2)I&\overline D_{m}\cr
 \end{pmat},
\end{equation}
where
\begin{equation}
\overline D_{i}=
\begin{pmat}[{.....|}]
 {i}&{0}&{0}&\hdots&{0}&{0}&{0}\cr
 {0}&{i-1}&{0}&\hdots&{0}&{0}&{0}\cr
 {0}&{0}&{i-2}&\hdots&{0}&{0}&{0}\cr
 \vdots&\vdots&\vdots&\ddots&\vdots&\vdots&\vdots\cr
 {0}&{0}&{0}&\hdots&{2}&{0}&{0}\cr
 {0}&{0}&{0}&\hdots&{0}&{1}&{0}\cr
\end{pmat}
\end{equation}
is the ``condensed'' version of $D_{i-1,i}$,  consisting of a diagonal $i\times i$ matrix augmented by a column of zeros.

From now on it will be convenient to refer to the entries of $\overline B(m)$ (and
matrices derived from it) by their positions relative to the submatrices
$\overline D_i$.  Note this is a change in the way we are indexing blocks, compared
to how we did it in the original matrix $M$.  
 The row of the main matrix containing
the $k$-th  row of $\overline D_i$ is assigned the label $[i,k]$
and the column containing the $\ell$-th column of $\overline D_{j}$
is assigned the label $[j,\ell]$, while the first column is
labeled $[0,1]$. Thus, in a row index $[i,k]$ we have $1\leq i\leq m$
and $1\leq k\leq i$, while a column index $[j,\ell]$ has
$0\leq j\leq m$ and $1\leq \ell\leq j+1$.

We shall perform some row and column operations
using the odd entries on the main diagonals of the $\overline D_i$s to kill
diagonal entries of the main matrix. Each odd entry of  $\overline D_i$
for $1\leq i\leq m-1$ will be used to kill the two diagonal entries of the main matrix, one in the same row as the odd entry and one  in the same column. The  odd entries of $\overline D_m$ will be used to kill the diagonal entry in the same row.  This procedure will create  new entries $(4)$ at locations where  there previously were zeroes. 
To be precise, if the odd entry is at  $([i,k],[i,k])$  then the diagonal entry of the main matrix in  the same row is at $([i,k],[i-1,k])$. The diagonal entry of the main matrix in  the same column as $([i,k],[i,k])$ is at $([i+1,k],[i,k])$ (with no such
entry if $i=m$). By multiplying column $[i,k]$
by a suitable element of $(2)$  and subtracting it from column $[i-1,k]$, we
kill off the  entry at $([i,k],[i-1,k])$  and create a new entry $(4)$ at $([i+1,k],[i-1,k)$,
if $i\leq m-1$. The new entry is $(4)$ because the 
entry being subtracted from zero is a $(2)$-multiple of the $(2)$ at $([i+1,k],[i,k])$
on the main diagonal. No new entry is created if $i=m$.
Then, we can subtract a $(2)$-multiple of  row $[i,k]$ from row $[i+1,k]$ to kill off the
diagonal entry at $([i+1,k],[i,k])$, without creating any new nonzero entries, if
$i\leq m-1$, while there is nothing to be done when $i=m$.
The following figure (for $m=5$) shows the resulting matrix after performing these operations with the entry 1 at  $([3,3],[3,3])$. The block indices are specified by numbered braces.

\begin{equation*}
\begin{array}{rl}
&\begin{matrix}
\overbrace{\phantom{\begin{pmat}.{}.(0)\cr \end{pmat}}}^{\mbox{0}}&
\overbrace{\phantom{\begin{pmat}.{}.(0)&0\cr\end{pmat}}}^{\mbox{1}}&\overbrace{\phantom{\begin{pmat}.{}.(0)&(0&0\cr\end{pmat}}}^{\mbox{2}}&\overbrace{\phantom{\begin{pmat}.{}.(0)&(0)&0&0\cr\end{pmat}}}^{\mbox{3}}&\overbrace{\phantom{\begin{pmat}.{}.(0)&(0)&(0)&(0&0\cr\end{pmat}}}^{\mbox{4}}&
\overbrace{\phantom{\begin{pmat}.{}.0&0&0&0&(0)\cr\end{pmat}}}^{\mbox{5}}
\end{matrix}\\
\begin{pmat}.{}.
\coolleftbrace{1}{Y}\cr
\coolleftbrace{2}{Y\\Y}\cr
\coolleftbrace{3}{Y\\ Y\\Y}\cr
\coolleftbrace{4}{Y\\Y\\Y\\Y}\cr
\coolleftbrace{5}{Y\\ Y\\ Y\\ Y\\ Y\\Y}\cr
\end{pmat}%
&
\begin{pmat}[{|.|..|...|....|.....}]
(2)&1&0&0&0&0&0&0&0&0&0&0&0&0&0&0&0&0&0&0&0\cr\-
0&(2)&0&2&0&0&0&0&0&0&0&0&0&0&0&0&0&0&0&0&0\cr
0&0&(2)&0&1&0&0&0&0&0&0&0&0&0&0&0&0&0&0&0&0\cr\-
0&0&0&(2)&0&0&3&0&0&0&0&0&0&0&0&0&0&0&0&0&0\cr
0&0&0&0&(2)&0&0&2&0&0&0&0&0&0&0&0&0&0&0&0&0\cr
0&0&0&0&0&0&0&0&1&0&0&0&0&0&0&0&0&0&0&0&0\cr\-
0&0&0&0&0&0&(2)&0&0&0&4&0&0&0&0&0&0&0&0&0&0\cr
0&0&0&0&0&0&0&(2)&0&0&0&3&0&0&0&0&0&0&0&0&0\cr
0&0&0&0&0&(4)&0&0&0&0&0&0&2&0&0&0&0&0&0&0&0\cr
0&0&0&0&0&0&0&0&0&(2)&0&0&0&1&0&0&0&0&0&0&0\cr\-
0&0&0&0&0&0&0&0&0&0&(2)&0&0&0&0&5&0&0&0&0&0\cr
0&0&0&0&0&0&0&0&0&0&0&(2)&0&0&0&0&4&0&0&0&0\cr
0&0&0&0&0&0&0&0&0&0&0&0&(2)&0&0&0&0&3&0&0&0\cr
0&0&0&0&0&0&0&0&0&0&0&0&0&(2)&0&0&0&0&2&0&0\cr
0&0&0&0&0&0&0&0&0&0&0&0&0&0&(2)&0&0&0&0&1&0\cr
\end{pmat}
\end{array}
\end{equation*}

We do this for all odd entries of all the $\overline D_i$. In our $m=5$ example,
after the operations the matrix looks as follows.
\begin{equation*}
\begin{pmat}[{|.|..|...|....|.....}]
0&1&0&0&0&0&0&0&0&0&0&0&0&0&0&0&0&0&0&0&0\cr\-
(4)&0&0&2&0&0&0&0&0&0&0&0&0&0&0&0&0&0&0&0&0\cr
0&0&0&0&1&0&0&0&0&0&0&0&0&0&0&0&0&0&0&0&0\cr\-
0&0&0&0&0&0&3&0&0&0&0&0&0&0&0&0&0&0&0&0&0\cr
0&0&(4)&0&0&0&0&2&0&0&0&0&0&0&0&0&0&0&0&0&0\cr
0&0&0&0&0&0&0&0&1&0&0&0&0&0&0&0&0&0&0&0&0\cr\-
0&0&0&(4)&0&0&0&0&0&0&4&0&0&0&0&0&0&0&0&0&0\cr
0&0&0&0&0&0&0&0&0&0&0&3&0&0&0&0&0&0&0&0&0\cr
0&0&0&0&0&(4)&0&0&0&0&0&0&2&0&0&0&0&0&0&0&0\cr
0&0&0&0&0&0&0&0&0&0&0&0&0&1&0&0&0&0&0&0&0\cr\-
0&0&0&0&0&0&0&0&0&0&0&0&0&0&0&5&0&0&0&0&0\cr
0&0&0&0&0&0&0&(4)&0&0&0&0&0&0&0&0&4&0&0&0&0\cr
0&0&0&0&0&0&0&0&0&0&0&0&0&0&0&0&0&3&0&0&0\cr
0&0&0&0&0&0&0&0&0&(4)&0&0&0&0&0&0&0&0&2&0&0\cr
0&0&0&0&0&0&0&0&0&0&0&0&0&0&0&0&0&0&0&1&0\cr
\end{pmat}
\end{equation*}

At this point, the rows and columns of the main matrix that correspond to an odd
entry of some $\overline D_i$ have no other nonzero entries. The submatrix $D(m)$ 
formed from these rows and columns is a square diagonal matrix with odd entries,
and the main matrix is the block sum of this diagonal matrix with the submatrix $A(m)$
formed from the remaining rows and columns. (Since permuting rows and columns results
in an integrally equivalent matrix, the reader may find it helpful, for easier 
visualization of this block sum decomposition, to renumber the rows and columns in order to put $D(m)$ at the bottom right of the main matrix.)
We observe that $A(m)$ will have entries $(4)$ all along its main diagonal,
and each row of $A(m)$ has one nonzero entry off the main diagonal (coming from the
even  entries on the main diagonals of the $\overline D_i$.)
In the picture below, we show $A(5)$.

\begin{equation*}
\begin{pmat}[{||.|.|..|..}]
(4)&0&2&0&0&0&0&0&0&0&0&0\cr\-
0&(4)&0&0&2&0&0&0&0&0&0&0\cr\-
0&0&(4)&0&0&0&4&0&0&0&0&0\cr
0&0&0&(4)&0&0&0&2&0&0&0&0\cr\-
0&0&0&0&(4)&0&0&0&0&4&0&0\cr
0&0&0&0&0&(4)&0&0&0&0&2&0\cr
\end{pmat}
\end{equation*}
 
We next note that if $i$ and $j$ have different parity then
all entries at locations $([i,k],[j,\ell])$ are zero, 
so $A(m)$ is the block sum of the submatrix $A''(m)$ corresponding to
rows $[i,k]$ and columns $[j,\ell]$ for odd indices $i$ and $j$,
and the submatrix $A'(m)$ formed from the rows and columns corresponding to the even
indices. For better visualization, we can reorder the rows and columns.
The block decomposition of $A(5)$ after reording rows and columns is
shown below.

\begin{equation*}
\begin{pmat}[{|.|..||.|..}]
(4)&2&0&0&0&0&0&0&0&0&0&0\cr\-
0&(4)&0&4&0&0&0&0&0&0&0&0\cr
0&0&(4)&0&2&0&0&0&0&0&0&0\cr\-
0&0&0&0&0&0&(4)&2&0&0&0&0\cr\-
0&0&0&0&0&0&0&(4)&0&4&0&0\cr
0&0&0&0&0&0&0&0&(4)&0&2&0\cr
\end{pmat}
\end{equation*}
 
We can see from this picture that $A'(5)=2\overline B(2)=A''(5)$. 
In general, it is easy to see  that  if we delete the rows and columns
of $\overline D_i$ which contain odd entries, the resulting matrix is
$2\overline D_{i'}$ where $i'=\lfloor\frac{i}{2}\rfloor$ (with
the understanding that $\overline D_{0}$ is the empty matrix).
It follows that $A'(m)=2\overline B(m')$, where $m'=\lfloor\frac{m}{2}\rfloor$,
and $A''(m)=2\overline B(m'')$, and  $m''=\lfloor\frac{m-1}{2}\rfloor$.  (More formally, $A'(m)$ belongs to the class $2B(m')$ and $A''(m)$ belongs to the class $2B(m'')$.)

Thus, we have shown that 
$\overline B(m)$ is $2$-locally equivalent to a matrix $C$ which is the diagonal block sum
of $2\overline B(m')$ and $2\overline B(m'')$ and the diagonal matrix $D(m)$. 
Arguing by induction on $m$, there are $\Z_{(2)}$-unimodular row and column operations 
on $C$  that kill the diagonals of $A'(m)$ and of $A''(m)$ while leaving all other entries
of $C$  unchanged. The resulting matrix is equal, up to reordering  rows and columns, to the matrix obtained from $\overline B(m)$ by zeroing out its diagonal.
\end{proof}

\begin{theorem}\label{Adiag} Suppose that $n=2m$ is even. Then the adjacency matrix
 $A$ of the $n$-cube $Q_n$ is $\Z_{(2)}$-equivalent to a diagonal form 
 with $\tbinom{n}{m}$ diagonal entries equal to zero and whose nonzero diagonal entries are the integers $k=1$,$2$,\dots $m$, in which the multiplicity of $k$ is $2\tbinom{n}{m-k}$. 
\end{theorem}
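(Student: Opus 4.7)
The plan is to assemble the preceding results into a single chain of $\Z_{(2)}$-equivalences and then read off the diagonal entries by a telescoping count. The ingredients already at hand are: the block splitting $\tilde A=M\oplus N$ of (\ref{MNmatrix}); Lemma \ref{half}, which halves the problem to the matrix $M$ at the cost of doubling all elementary-divisor multiplicities; Theorem \ref{Bier}, which gives an integral equivalence $E(m-1)\cdot M\cdot E(m)^{-1}=B$ with $B$ the explicit block matrix in (\ref{Jmatrix}); and the lemma just proved, which supplies a $\Z_{(2)}$-equivalence between $B$ and the matrix $B'$ in (\ref{Jprimematrix}) obtained by deleting the scalar blocks $(n-2i)I$.

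The first key observation I would make is that $B'$ is already essentially diagonal: it is block-upper-triangular, the only nonzero blocks are the Wilson forms $D_{i-1,i}$, and each such form is itself a diagonal matrix with an adjoined block of zero columns, by the explicit description (\ref{Dij}). A permutation of rows and columns therefore places $B'$ into honest diagonal form, and the nonzero diagonal entries are precisely the union of the nonzero diagonal entries of the various $D_{i-1,i}$ for $i=1,\dots,m$.

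Next I would carry out the count. From (\ref{Dij}), the nonzero value $i-j$ appears in $D_{i-1,i}$ with multiplicity $\tbinom{n}{j}-\tbinom{n}{j-1}$ for $j=0,1,\dots,i-1$. Fixing $k\in\{1,\dots,m\}$ and substituting $j=i-k$, summing over $i=k,k+1,\dots,m$ yields
\begin{equation*}
\sum_{i=k}^{m}\left(\tbinom{n}{i-k}-\tbinom{n}{i-k-1}\right)=\tbinom{n}{m-k}
\end{equation*}
by telescoping, which is the multiplicity of $k$ as a $\Z_{(2)}$-elementary divisor of $M$. Doubling via Lemma \ref{half} gives the asserted multiplicity $2\tbinom{n}{m-k}$ in $A$. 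The remaining diagonal positions in the $2^n\times 2^n$ diagonal form are forced to be zero, and the count $\tbinom{n}{m}$ agrees with the known nullity of $A$ coming from its eigenvalue $0$ of multiplicity $\tbinom{n}{m}$.

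The bulk of the intellectual work has already been done in the preceding sections, so no significant obstacle remains; what is left is essentially bookkeeping. The only place where some care is needed is keeping the block dimensions straight through the reductions, so that the telescoping sum lands on $\tbinom{n}{m-k}$ rather than on a shifted binomial coefficient, and so that the zero columns of $B'$, together with the symmetric contribution from $N$ provided by Lemma \ref{half}, account for exactly $\tbinom{n}{m}$ diagonal zeros in the final diagonal form.
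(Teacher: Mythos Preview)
Your proposal is correct and follows essentially the same route as the paper: reduce to $M$ via Lemma~\ref{half}, pass to $B'$ using Theorem~\ref{Bier} and the $\Z_{(2)}$-equivalence lemma, and then read off the entries of the Wilson forms $D_{i-1,i}$ with the same telescoping sum $\sum_{i=k}^{m}\bigl(\tbinom{n}{i-k}-\tbinom{n}{i-k-1}\bigr)=\tbinom{n}{m-k}$. The only addition you make beyond the paper's proof is the explicit remark on the zero count, which is consistent with the eigenvalue $0$ having multiplicity $\tbinom{n}{m}$.
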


\begin{proof} By Lemma~\ref{half}, it suffices to find a diagonal form for $M$,
hence for $B'$, with which we have shown $M$ to be integrally equivalent. 
The nonzero entries of $B'$ are the integers $1$ to $m$. The integer $k$
occurs in $D_{i-1,i}$ only when $k\leq i$ and then its
multiplicity in  $D_{i-1,i}$ is $\tbinom{n}{i-k}-\tbinom{n}{i-1-k}$.
Therefore, the multiplicity of $k$ in $B'$ is 
\begin{equation}
\sum_{i=k}^{m}\tbinom{n}{i-k}-\tbinom{n}{i-1-k} = 
\sum_{s=0}^{m-k}\tbinom{n}{s}-\tbinom{n}{s-1}=\tbinom{n}{m-k}.
\end{equation}
\end{proof}

\begin{corollary} Conjecture~\ref{conjDJ} is true.
\end{corollary}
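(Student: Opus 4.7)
The plan is to verify directly that the diagonal form produced by Theorem~\ref{Adiag} has its 2-adic data matching the count of 2-power divisors of the eigenvalues, as predicted by Conjecture~\ref{conjDJ}. Since Theorem~\ref{Adiag} gives a $\Z_{(2)}$-equivalent diagonal form for $A$, the multiplicity of $2^i$ as a $2$-elementary divisor of $A$ equals the total multiplicity (in that diagonal form) of nonzero entries $k$ whose 2-adic valuation $v_2(k)$ equals $i$. The zero diagonal entries do not contribute to any $2^i$ with $i$ finite.

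First, I would use Theorem~\ref{Adiag} to write the multiplicity of $2^i$ as
\begin{equation*}
\mu(2^i) \;=\; \sum_{\substack{1\leq k\leq m\\ v_2(k)=i}} 2\tbinom{n}{m-k}.
\end{equation*}
Next, recall from \S\ref{ncube} that the eigenvalues of $A$ are $n-2\ell=2(m-\ell)$ with multiplicity $\tbinom{n}{\ell}$, for $0\leq \ell\leq n$. An eigenvalue $2(m-\ell)$ is divisible by exactly $2^{i+1}$ precisely when $v_2(m-\ell)=i$; in particular, such eigenvalues are nonzero, so $m-\ell\neq 0$. Setting $k=m-\ell$, the indices $\ell$ with $v_2(m-\ell)=i$ split into pairs $\{\ell,\,n-\ell\}=\{m-k,\,m+k\}$ with $1\leq k\leq m$ and $v_2(k)=i$, each pair contributing $\tbinom{n}{m-k}+\tbinom{n}{m+k}=2\tbinom{n}{m-k}$ to the eigenvalue count.

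Therefore the number of eigenvalues of $A$ whose exact $2$-power divisor is $2^{i+1}$ equals
\begin{equation*}
\sum_{\substack{1\leq k\leq m\\ v_2(k)=i}} 2\tbinom{n}{m-k},
\end{equation*}
which is precisely $\mu(2^i)$. This matches Conjecture~\ref{conjDJ} for every $i\geq 0$, so the conjecture holds. There is no substantial obstacle here: once Theorem~\ref{Adiag} is proved, the corollary is simply the observation that the Theorem is a reformulation of the conjecture via the symmetry $\tbinom{n}{m-k}=\tbinom{n}{m+k}$ that pairs up the eigenvalues $\pm 2k$.
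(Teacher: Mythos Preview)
Your proof is correct and follows essentially the same approach as the paper: both use Theorem~\ref{Adiag} to read off the multiplicity of $2^i$ among the $2$-elementary divisors as $\sum_{v_2(k)=i}2\tbinom{n}{m-k}$, and then match this against the count of eigenvalues $\pm 2k$ with $v_2(2k)=i+1$ via the symmetry $\tbinom{n}{m-k}=\tbinom{n}{m+k}$. The only cosmetic difference is that the paper phrases the eigenvalues as $\pm 2k$ each with multiplicity $\tbinom{n}{m-k}$, while you index by $\ell$ and invoke the binomial symmetry explicitly.
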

By Lemma~\ref{half}, the multiplicity of the prime power 
$p^e$ as an $p$-elementary divisor of $\tilde A$, and hence also of  
$A$, is twice the sum of the binomial coefficients  $\tbinom{n}{m-k}$, taken over
those $k$ with $1\leq k\leq m$ that are exactly divisible by $p^e$.
On the other hand, we know (\S~\ref{ncube}) 
that the nonzero eigenvalues of $A$ are the integers $\pm 2k$ for $1\leq k\leq m$,
and the multiplicity of $2k$ is $\tbinom{n}{m-k}$. 
Comparing these numbers for $p=2$, we see that Conjecture~\ref{conjDJ} is true.

\subsection{Proof of Theorem~\ref{Qdiag}}
By Theorem~\ref{Adiag}, it suffices to show, for every odd prime $p$, that $A$ is $\Z_{(p)}$-equivalent to a diagonal matrix whose nonzero entries are $k=1$,\dots,$m$, where
$k$ has multiplicity $2\tbinom{n}{m-k}$.  Let $p$ be given. We know from \cite{DJ} that $A$ is
$\Z_{(p)}$-equivalent to a diagonal matrix whose nonzero are $n-2\ell$
with multiplicity $\tbinom{n}{\ell}$, for $0\leq \ell\leq n$.  The latter is easily seen to be
integrally equivalent to a diagonal matrix whose nonzero entries are $2k=1$,\dots,$m$, where
$k$ has multiplicity $2\tbinom{n}{m-k}$, and  hence $\Z_{(p)}$-equivalent to
the diagonal form given in Theorem~\ref{Qdiag}.

\subsection{Final remarks}
It would be of interest to find a diagonal form for the Laplacian matrix $nI-A$ of $Q_n$.
The Smith group of this matrix is called the {\it critical group} of $Q_n$.
By the results of \cite{Bai}, only the $2$-Sylow subgroup of the critical group
remains to be determined, for both odd and even $n$. 
We do not have any conjecture about its exact structure.  
However, we note that if two integral matrices are equal modulo $p^s$ then for  $i<s$
the  multiplicity of $p^i$ as a $p$-elementary divisor is the same for both matrices.
Thus, for example, when $n=2^s$, Theorem~\ref{Adiag} gives part of the cyclic
decomposition of the $2$-Sylow subgroup of the critical group.

\end{document}